\renewcommand{\ref}{\hyperref}
\newcommand{\Hcal}{\mathcal{H}}
\newcommand{\Z}{\mathbb{Z}}
\renewcommand{\P}{\mathbb{P}}
\newcommand{\Pn}{\P^n}
\newcommand{\C}{\mathbb{C}}
\DeclareMathOperator{\euler}{\chi_\mathrm{top}} 
\newcommand{\csm}{c_{\mathrm{SM}}}
\newcommand{\abs}[1]{\left\lvert#1\right\rvert}
\newcommand{\one}{{\mathbf{1}}}
\newtheorem{theorem}{Theorem}[section]
\newtheorem{corollary}[theorem]{Corollary}
\newtheorem{thm}[theorem]{Theorem}
\newtheorem{prop}[theorem]{Proposition}
\theoremstyle{definition}
\newtheorem{example}[theorem]{Example}
\theoremstyle{plain}
\newcommand{\terminou}{\hfill$\lrcorner$}
\begin{document}
\author[{\scriptsize\rm{}T.~Fassarella and N.~Medeiros}]
{
T. Fassarella and N. Medeiros
}

\title[{\scriptsize\rm{Monomial transformations and toric polar maps}}]
{Monomial Cremona transformations and toric polar maps\\}

\begin{abstract}
Given a birational map in the three dimensional projective space defined by monomials of degree $d$, we prove that its inverse is defined by monomials of degree at most $d^2-d+1$.
\end{abstract}

\maketitle

%
%
\section{Introduction}
Let $\Pn$ denote the $n$-dimensional projective space over an arbitrary  field. 
A \emph{monomial Cremona transformation} of $\Pn$ is
a birational map $\varphi\colon \Pn \dashrightarrow \Pn$ whose components are monomials of the same positive degree $d$, assumed to have no common factors. A basic fact (see \cite[Theorem 4.2]{SV} or \cite[Theorem 2]{J}) is that its inverse $\varphi^{-1}$ is also
defined by monomials, say of degree $d'$.
We seek for an upper bound for $d'$ in terms of $d$ and $n$.

For $n=2$ and arbitrary $\varphi$, it is straightforward to see that $d'=d$ (cf. \S2 below), hence we assume $n\geq 3$.
Extensive computer calculations 
performed by Johnson in \cite{J}  led him to suggest the estimate 
\begin{equation}
\label{eq-Johnson-estimate}
d'\leq 1 + d-1 + (d-1)^2 + \dotsb + (d-1)^{n-1}.
\end{equation}
Moreover, up to permutation of variables or coordinates, the equality should occur only for the monomial map 
\begin{equation*}
\varphi_{n,d}(x_0:\cdots:x_n):=(x_0^d:x_0^{d-1}x_1:x_1^{d-1}x_2:\cdots:x_{n-1}^{d-1}x_n).
\end{equation*}
The case $d=2$ was settled in \cite[Theorem~2.6]{CS}.  
Johnson's 
computations show that the inequality holds for small values of $d$ when $n=3,4$. The answer is not known in general; we refer the reader to  \cite{J},  \cite{SV}, and \cite{DL} for more information and related results.
The aim of this short note is to prove  that Johnson's estimate holds for $n=3$, 
that is,
\begin{equation}
\label{eq-estimate-P3}
d'\leq d^2-d+1 
\end{equation} 
with equality attained only for $\varphi_{3,d}$. This is our Theorem~\ref{thm-bound}.

Our approach is to reformulate the problem in topological terms via the use of Chern-Schwartz-MacPherson (CSM) classes. We remark that the inverse of $\varphi$  is completely characterized by the exponents of the monomials, hence it is independent of the  ground field (\cite[Theorems 2.2 and 4.2]{SV}), so we may work over the complex numbers. Then, we consider the polynomial $f$ in $\C[x_0,\dotsc,x_3]$ given by the sum of all monomials defining $\varphi$. The key observation is that its \emph{toric polar map} $T_f$ and the monomial map $\varphi$ are given by the same linear system.
By using the results of
\cite{FMS}, we
relate the multidegrees of $\varphi$ with the CSM class of the surface $V(f)\subset \P^3$, which in turn enable us to prove Johnson's bound. 
In higher dimension the combinatorics is more intricate and we refrain to pursue the matter here.
To conclude, we point out that it would be interesting to compare the techniques used in this paper with 
the methods of \cite{Alu15}.

\section{Monomial transformations and toric polar maps}

Although all the concepts described below hold in arbitrary dimension, we specialize right from the start to the three dimensional complex projective space.

Let $A=(a_{ij})_{i,j=0,\dotsc,3}$ be a matrix of non-negative integers. Borrowing the terminology of \cite{J}, we say that $A$ is \emph{$d$-stochastic} if the sum of the entries in each row is $d$.
Note that the determinant of a $d$-stochastic matrix is  a multiple of $d$.
We associate to $A$ a rational monomial map
\begin{align*}
	\varphi\colon \P^{3} &\dashrightarrow \P^{3}\\
	x &\ \mapsto \  
	(x^{a_0}:\cdots:x^{a_3})
\end{align*}
where the vectors $a_0,\dotsc,a_3 \in \Z^4$ are  the rows of $A$ and $x^{a_i}=x_0^{a_{i0}}\dotsb x_3^{a_{i3}}$.
We  assume that the monomials have no common factor and thus each column of $A$ has at least one zero entry. 

The map $\varphi$ is birational if and only if 
 $\abs{\det(A)}=d$, see e.g. \cite[Proposition 2.1]{SV03} or
 \cite[Proposition~3.1]{GP03}. From now on we assume that is the case.
Let $d'$ be the common degree of the monomials (also assumed to have no common factor) defining the inverse $\varphi^{-1}$. 

Let $\Gamma\subset \P^3\times \P^3$ denote the graph of $\varphi$. Let $h$ (resp. $k$) be the pullback of the hyperplane class from the first (resp. second) factor. The class of this graph in the Chow ring of $ \P^3\times \P^3$ determines integers $d_0, \dotsc, d_3$ such that
\begin{equation*}
	\label{eq-graus-proj}
	[\Gamma] = d_0 k^3+d_1k^{2}h+ d_2kh^2 +d_3h^3 
	\qquad \in A_*(\P^3\times \P^3).
\end{equation*}
These integers are called the \emph{multidegrees} or {\it projective degrees} of the map ${\varphi}$.
If $\mathbb P^{3-j}\subset \P^3$ is a general linear  subspace of codimension $0\leq j \leq 3$, then
$d_j$ is the degree of the closed subset $\overline{\varphi^{-1}(\P^{3-j})}$. 

We always have $d_0=1$. We are assuming $\varphi$ birational, hence $d_3 = 1$. Finally, since the monomials defining $\varphi$ have no common {factor}, we 
have $d_1=d$.

Let $d_0',\dotsc,d_3'$ be the multidegrees  of $\varphi^{-1}$. Since the graph  of $\varphi^{-1}$ is the graph $\Gamma$ with the factors exchanged,
their multidegrees are reversed,  
that is,  
\[
(1,d_1',d_2',1) = (1,d_2,d_1,1).
\] 
In particular, we have $d'=d_1'=d_2$.
\medskip

Now we introduce the tools we shall employ to compute $d_2$. To the matrix $A$ we associate the polynomial
$f=\varphi^*(x_0+\dotsb+x_3) := \sum_{i=0}^3 x^{a_i}$ in $\C[x_0,\dotsc,x_3]$, given by sum of the monomials defining $\varphi$. Note that $f$ is homogeneous of degree $d$.
Let $D=V(f)\subset \P^3$ the surface given by the zero locus scheme of $f$. We consider the rational map 
\begin{align*}
T_f\colon \P^{3} &\dashrightarrow \P^{3}\\
x &\mapsto \big(x_{0}\frac{\partial f}{\partial x_0}(x):\cdots :x_{3}\frac{\partial f}{\partial x_3}(x)\big)
\end{align*}
called the \emph{toric polar map} of $f$. The key observation here is that both maps $\varphi$ and $T_f$ are given by the same linear
system: This follows from the fact that $A$, the exponent matrix of $f$, is invertible.
In particular, the monomial map $\varphi$ and the toric polar map $T_f$ have the same multidegrees and the same base locus.

The second ingredient we need are the 
\emph{Chern-Schwartz-MacPherson} classes.
These classes are defined for arbitrary varieties and, for smooth varieties, they agree with the  Chern class of the tangent bundle.
The degree of the CSM class of a
variety is its topological Euler characteristic, and the CSM class satisfy inclusion-exclusion; so, these classes may be seen as a generalization of the
Euler characteristic. We refer to \cite{AluCSMintro} for a gentle introduction;
for a quick summary of their basic properties, see
\cite[\S2.2]{AM09} or \cite[\S3]{FMS}.

Let $H_i=V(x_i)\subset\P^3$ for $i=0,\dotsc,3$ be the coordinate planes 
and $\Hcal=H_0\cup\cdots\cup H_3$  be their union.
The main result of \cite{FMS} is that the multidegrees of the toric polar map can be read directly from the CSM class of the open set $\P^3\setminus (D\cup \Hcal)$, provided that $D$ is `sufficiently general'.
Before stating it, let us setup some notation: For a constructible subset $Z\subseteq \P^3$, let $\one_Z$ be the constructible function yielding $1$ for $p\in Z$ and $0$ otherwise; in addition,
denote by $h$ the hyperplane class of $\P^3$. Here is the version needed for our purposes:

\begin{thm}[{\cite[Corollary 3.19]{FMS}}]
\label{thm-FMS}
Assume $f$ is nondegenerate with respect to its Newton polytope. Let $d_0,\dotsc,d_3$ be the multidegrees of the toric polar map $T_f$. Then:
\[
\csm(\one_{\P^3\setminus (D\cup\Hcal)}) = 
-d_3h^3 + d_2h^2 -d_1h + d_0 \qquad\in A_*\P^3.
\] 
\end{thm}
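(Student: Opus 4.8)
The plan is to evaluate both sides of the claimed identity through the geometry of the linear system defining $T_f$ and to match them by expressing each in terms of the Segre class of the common base scheme, with the nondegeneracy hypothesis providing the transversality needed to make the comparison exact. Throughout I set $U := \P^3\setminus(D\cup\Hcal)$.

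First I would treat the multidegree side. The map $T_f$ is given by the degree-$d$ forms $x_0\frac{\partial f}{\partial x_0},\dots,x_3\frac{\partial f}{\partial x_3}$, whose scheme-theoretic base locus $B\subset\P^3$ coincides with that of $\varphi$. Let $\pi\colon X\to\P^3$ be the blowup of $\P^3$ along $B$, with exceptional divisor $E$ and $H:=\pi^*h$; then the moving part of the pulled-back linear system is $dH-E$, so the multidegrees are
\[
d_j=\int_X (dH-E)^j\,H^{3-j},\qquad j=0,\dots,3.
\]
Expanding the binomial and pushing forward to $\P^3$ by the projection formula rewrites each $d_j$ as a universal polynomial in $d$ and the dimension-graded components of the Segre class $s(B,\P^3)$; this step is formal and needs no hypothesis on $f$.

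Next I would compute $\csm(\one_U)$. Writing $\one_U=\one_{\P^3}-\one_{D\cup\Hcal}$ and using additivity of CSM classes reduces the problem to the Chern class $c(T\P^3)\cap[\P^3]$ corrected by the contribution of the boundary $D\cup\Hcal$. The guiding observation is that this boundary contribution is governed by the same scheme $B$: the ideal $(x_0\frac{\partial f}{\partial x_0},\dots,x_3\frac{\partial f}{\partial x_3})$ is precisely the logarithmic Jacobian ideal adapted to the pair $(\P^3,\Hcal)$. I would therefore aim to establish the logarithmic analogue of Aluffi's formula for the CSM class of a hypersurface complement, in which the ordinary tangent bundle is replaced by the logarithmic tangent bundle $T\P^3(-\log\Hcal)$ and the ordinary Jacobian scheme by $B$. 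This would present $\csm(\one_U)$ as a second universal polynomial in $d$, $h$ and the components of $s(B,\P^3)$, after which a term-by-term comparison in each codimension with the expressions from the previous paragraph yields the asserted equality $\csm(\one_U)=d_0-d_1h+d_2h^2-d_3h^3$; the alternating signs come from the expansion of $(dH-E)^j$ together with the inclusion-exclusion signs. As a guiding consistency check I would first verify the linear case $d=1$, where $T_f$ is the identity, $U$ is the complement of five general hyperplanes in $\P^3$, and a direct inclusion-exclusion computation gives $\csm(\one_U)=1-h+h^2-h^3$, matching $(d_0,d_1,d_2,d_3)=(1,1,1,1)$; in particular $\int\csm(\one_U)=\euler(U)=-d_3$.

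The main obstacle is not the bookkeeping but the CSM computation of the previous paragraph: one must show that nondegeneracy with respect to the Newton polytope forces the singularities of the boundary to be exactly the expected toric ones, so that the complement is genuinely computed by the logarithmic tangent bundle with no stray Milnor-class corrections and with $B$ carrying the expected scheme structure. Concretely, this amounts to proving that, away from $B$, the surface $D$ and the coordinate planes $H_i$ are smooth and meet transversally along the strata of $\Hcal$, which is precisely where the combinatorics of the faces of the Newton polytope of $f$ enters. Making this transversality precise is the heart of the argument and is exactly the input imported from \cite{FMS}.
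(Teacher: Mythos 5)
First, a point of order: the paper does not prove this statement at all --- it is imported verbatim as \cite[Corollary 3.19]{FMS}, so there is no internal proof to measure your attempt against. Judged on its own terms, your outline has the right general shape (it is close in spirit to how identities of this type are established, going back to Aluffi's relation between the projective degrees of the gradient map and the CSM class of a hypersurface complement, and to Huh's theorem for very affine varieties), and your two formal ingredients check out: the expression $d_j=\int_X(dH-E)^jH^{3-j}$ for the multidegrees via the blowup of the base scheme is correct, and your $d=1$ sanity check ($U$ the complement of five general planes in $\P^3$, $\chi(U)=1-4+6-4=-1=-d_3$) is right.

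The gap is that the entire content of the theorem is concentrated in the step you yourself label as an ``aim'': the logarithmic analogue of Aluffi's formula, i.e.\ the assertion that $\csm(\one_U)$ is computed by the same Segre-class data $s(B,\P^3)$ as the multidegrees, with no Milnor-type correction terms, under the sole hypothesis of Newton nondegeneracy. You do not prove this, and it is not a routine transversality remark: nondegeneracy is a condition on the truncations $f_\sigma$ over \emph{all} faces $\sigma$ of the Newton polytope, and the natural way to exploit it is to stratify by torus orbits and use the Bernstein--Khovanskii--Kouchnirenko computation of Euler characteristics of nondegenerate hypersurfaces of tori, assembling $\csm(\one_U)$ orbit by orbit --- not a single global blowup/Segre-class comparison. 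Moreover, your explanation that ``the alternating signs come from the expansion of $(dH-E)^j$'' does not account for the most distinctive feature of the formula: unlike the classical identity for the gradient map, no binomial factors $(1+h)^{3-j}$ appear here, and the reason is that $T\P^3(-\log\Hcal)\cong\Ocal_{\P^3}^{\oplus 3}$ is trivial, so $c(T\P^3(-\log\Hcal))=1$. Without identifying and using this, the promised ``term-by-term comparison'' cannot close. As written, the proposal is a plausible research plan whose main lemma is precisely the theorem to be proved.
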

The nondegenerate condition means that for any face $\sigma$ of the Newton polytope of $f$, the surface $V(f_{\sigma})$ is smooth on the torus $(\C^*)^3$, where $f_\sigma$ is defined by the sum of monomials given by the vertices of $\sigma$; see e.g. \cite[\S3.4]{FMS} for details. As observed in \cite[Proposition~4.8]{FMS}, our polynomial $f$ is nondegenerate, since it comes from  an invertible monomial transformation. Then Theorem~\ref{thm-FMS} can be reformulated in terms of monomial maps:
\begin{corollary}
\label{thm-FMS*}
Let $\varphi\colon \P^3 \dashrightarrow \P^3$  be a monomial Cremona transformation. If $D=V(f)$ where  $f=\varphi^*(x_0+\dotsb+x_3)$, then 
\[
\csm(\one_{\P^3\setminus (D\cup\Hcal)}) = 
-h^3 + d'h^2 -dh + 1  \qquad\in A_*\P^3.
\] 
\end{corollary}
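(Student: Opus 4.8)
The plan is to obtain this statement as a direct specialization of Theorem~\ref{thm-FMS}, so the work reduces to checking that its hypothesis is met and then identifying the four multidegrees that appear on its right-hand side. First I would verify that Theorem~\ref{thm-FMS} is applicable. Its only hypothesis is that $f$ be nondegenerate with respect to its Newton polytope. Since $\varphi$ is a monomial Cremona transformation, its exponent matrix $A$ is invertible, and as observed in \cite[Proposition~4.8]{FMS} the associated polynomial $f=\varphi^*(x_0+\dotsb+x_3)$ is then nondegenerate. Thus Theorem~\ref{thm-FMS} yields
\[
\csm(\one_{\P^3\setminus (D\cup\Hcal)}) = -d_3h^3 + d_2h^2 - d_1h + d_0,
\]
where $d_0,\dotsc,d_3$ are the multidegrees of the toric polar map $T_f$.

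The next step is to pass from the multidegrees of $T_f$ to numerical data already computed for $\varphi$. Here I would invoke the key observation recorded above: because $A$ is invertible, the maps $\varphi$ and $T_f$ are defined by the same linear system, and hence share the same multidegrees. It therefore suffices to substitute the values established earlier in this section, namely $d_0=1$ (always), $d_3=1$ (as $\varphi$ is birational), $d_1=d$ (since the defining monomials have no common factor), and $d_2=d'$ (from the relation $(1,d_1',d_2',1)=(1,d_2,d_1,1)$ between the multidegrees of $\varphi$ and $\varphi^{-1}$). Plugging these four values into the displayed formula produces exactly the claimed expression $-h^3+d'h^2-dh+1$.

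I do not expect a genuine obstacle in this argument: all of the substantive content—the intersection-theoretic computation of the CSM class and the verification of nondegeneracy—is already packaged in Theorem~\ref{thm-FMS} and in \cite[Proposition~4.8]{FMS}, while the identification of the multidegrees of $\varphi$ was carried out in the preceding discussion. The proof is thus essentially a matter of assembling these ingredients and performing the substitution. If anything requires care, it is simply making explicit that the equality of linear systems forces the equality of multidegrees, so that the intrinsically toric-polar statement of Theorem~\ref{thm-FMS} can be read off directly in terms of the monomial map $\varphi$.
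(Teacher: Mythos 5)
Your proposal is correct and follows exactly the route the paper intends: nondegeneracy of $f$ via \cite[Proposition~4.8]{FMS} to apply Theorem~\ref{thm-FMS}, equality of the linear systems of $\varphi$ and $T_f$ to identify their multidegrees, and the substitutions $d_0=d_3=1$, $d_1=d$, $d_2=d'$ established in the preceding discussion. No gaps.
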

The corollary provides a way to express $d'$ in terms of the geometry of the surface $D$.

\section{Main theorem}

Let $\varphi\colon \P^3 \dashrightarrow \P^3$ be a birational map defined by monomials of the same positive degree $d$ and having no common factor.  Let $d'$ be the common degree of the monomials defining the inverse $\varphi^{-1}$. Our goal is to prove that \eqref{eq-estimate-P3} holds, with equality attained only for $\varphi_{3,d}$, up to permutation of variables or coordinates.   

Following the notation of the previous section,  we associate to $\varphi$ the surface $D=V(f)$, where $f$ is the polynomial given by the sum of the monomials defining $\varphi$.  We shall use Corollary~\ref{thm-FMS*} to compute $d'$; in order 
to do so, we use the inclusion-exclusion properties of the CSM class.
We have:
\begin{align*}
\csm(\one_{D\cup \Hcal}) & = \csm(\one_D) + \sum_{0\leq i \leq 3}\csm(\one_{H_i})
 -\sum_{0\leq i \leq 3}\csm(\one_{D\cap H_i}) 
 - \sum_{0\leq i<j \leq 3} \csm(\one_{H_i\cap H_j}) \\
&+  \sum_{0\leq i<j \leq 3}\csm(\one_{D\cap H_i\cap H_j})  + (\text{terms in codimension $\geq 3$}) 
 \qquad \qquad\qquad \in A_*\P^3,
\end{align*}
where we consider the pushforward of  all classes to the Chow group of the ambient space. We are interested in the coefficients in codimension 2, so the omitted terms are irrelevant. 

Let us describe each one of remaining classes in detail. We call the six  intersections $H_i\cap H_j$, with $0\leq i< j\leq 3$, the \emph{fundamental lines} of $\P^3$.

\begin{enumerate}[(a)]
\item Write $\csm(\one_D)=c_3h^3+c_2h^2+c_1h$. Then
$c_3$ is the topological Euler characteristic $\euler(D)$. Moreover, by \cite[Proposition 2.6]{Alu13} or \cite{Ohm03}, we have
\begin{equation*}
 \euler(D_h) = \int \csm(\one_{D_h})  = c_2 - c_1
\end{equation*}
where $D_h$ is a general plane section of $D$. 
Now, since the singular locus of $D$ is contained in the base locus of $T_f$
which, being equal to the base locus of $\varphi$, is supported on the six fundamental lines,  we see that $D_h$ is a plane curve of degree $d$ with only isolated singularities. Hence, from \cite[Example~14.1.5]{Ful84} we get
\begin{equation*}
	 \euler(D_h) = 3d-d^2 +   \mu(D_h)
 \end{equation*}
where $\mu(D_h) = \sum_{p\in D_h}\mu_p(D_h)$  is the sum of its Milnor numbers. Here we have used the fact that a smooth plane curve of degree $d$ has Euler characteristic $3d-d^2$. Since $c_1=\deg(D_\mathrm{red})=d$ we obtain
\begin{equation*}
\label{eq-c2}
c_2 = 4d-d^2 + \mu(D_h).
\end{equation*}
\item For the coordinate planes, the CSM class is just the pushforward of the Chern class of the tangent bundle of $\P^2$, so
for $i=0,\dotsc,3$:
\[
\csm(\one_{H_i}) = 3h^3+3h^2+h 
\qquad \in A_*\P^3.
\]
\item For $i=0,\dotsc,3$ we have
\[
\csm(\one_{D\cap H_i})=\euler(D\cap H_i)h^3 + k_ih^2
\qquad \in A_*\P^3
\]
where $k_i$ is the degree of the support of the plane curve $D\cap H_i$, that is,
\[
k_i:=\deg(D\cap H_i)_\mathrm{red}.
\]
\item For each fundamental line,
\[
\csm(\one_{H_i\cap H_j}) = 2h^3+h^2
\qquad \in A_*\P^3.
\] 
\item Finally, for $0\leq i< j\leq 3$,  define 
\[
\ell_{i,j}:=
\begin{cases}
1, & \text{if $D$ contains the line $H_i\cap H_j$}\\
0, & \text{otherwise}.
\end{cases}	
\]
Then we have:
\[
\csm(\one_{D\cap H_i\cap H_j}) = ah^3 +\ell_{i,j} h^2
\qquad \in A_*\P^3.
\]
where $a=2$ if $D$ contains the line $H_i\cap H_j$ and $a=\deg(D\cap H_i\cap H_j)_\mathrm{red}$, otherwise. 
\end{enumerate}
Now we can state a formula for $d'$ in
purely geometric terms.
\begin{prop}
\label{prop-eq-d'}
With notations as above, we have
\begin{equation*}
d' = d^2-4d-\mu(D_h)+\sum_{0\leq i \leq 3} k_i - \sum_{0\leq i < j \leq 3} \ell_{i,j}.
\end{equation*}
\end{prop}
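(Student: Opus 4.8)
The plan is to read off $d'$ as the coefficient of $h^2$ in $\csm(\one_{\P^3\setminus(D\cup\Hcal)})$ via Corollary~\ref{thm-FMS*}, and to compute that same coefficient from the inclusion--exclusion expansion of $\csm(\one_{D\cup\Hcal})$ assembled from the pieces (a)--(e) above. The bridge between the two descriptions is the additivity of CSM classes on constructible functions.

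First I would apply that additivity to the identity $\one_{\P^3} = \one_{\P^3\setminus(D\cup\Hcal)} + \one_{D\cup\Hcal}$, which yields
\[
\csm(\one_{\P^3\setminus(D\cup\Hcal)}) = \csm(\one_{\P^3}) - \csm(\one_{D\cup\Hcal}) \qquad \in A_*\P^3.
\]
Since $\P^3$ is smooth, $\csm(\one_{\P^3}) = c(T\P^3)\cap[\P^3] = (1+h)^4\cap[\P^3]$, so its coefficient of $h^2$ is $\binom{4}{2}=6$. By Corollary~\ref{thm-FMS*} the coefficient of $h^2$ on the left-hand side is exactly $d'$, so it remains to extract the coefficient of $h^2$ in $\csm(\one_{D\cup\Hcal})$ and substitute into the displayed identity.

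For that I would collect the codimension-two ($h^2$) contributions from the five families appearing in the inclusion--exclusion formula: from (a) the term $c_2 = 4d-d^2+\mu(D_h)$; from (b) the four coordinate planes contributing $4\cdot 3 = 12$; from (c) the four sections $D\cap H_i$ contributing $-\sum_{i} k_i$; from (d) the six fundamental lines contributing $-6$; and from (e) the six schemes $D\cap H_i\cap H_j$ contributing $+\sum_{i<j}\ell_{i,j}$. Summing these, the coefficient of $h^2$ in $\csm(\one_{D\cup\Hcal})$ equals $4d-d^2+\mu(D_h)+6-\sum_{i} k_i+\sum_{i<j}\ell_{i,j}$, and subtracting this from $6$ produces precisely the asserted formula for $d'$.

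The computation is essentially bookkeeping, so there is no deep obstacle; the points to watch are purely formal. One must use the complementary relation with $\csm(\one_{\P^3})$ — whose $h^2$-coefficient $6$ genuinely enters the final answer — rather than working with $\csm(\one_{D\cup\Hcal})$ in isolation, and one must confirm that only the $h^2$-layer is needed, so that every codimension-$\geq 3$ contribution, in particular all the Euler-characteristic $h^3$-coefficients $c_3$, $\euler(D\cap H_i)$, and the constants $a$ of (c) and (e), may be discarded without affecting the result. Keeping straight the inclusion--exclusion signs (positive for the even-fold intersections, negative for the odd-fold ones) is the last detail to verify.
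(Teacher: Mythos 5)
Your proposal is correct and follows essentially the same route as the paper: read $d'$ off as the $h^2$-coefficient in Corollary~\ref{thm-FMS*}, subtract the inclusion--exclusion expansion of $\csm(\one_{D\cup\Hcal})$ from $\csm(\one_{\P^3})$, and collect the codimension-two contributions $c_2$, $4\cdot 3$, $-\sum_i k_i$, $-6$, $+\sum_{i<j}\ell_{i,j}$, exactly as in the paper's one-line computation. The only blemish is the final parenthetical, which states the sign rule backwards (for a union of $N$ sets the $k$-fold intersections enter with sign $(-1)^{k+1}$, so odd-fold are positive); the signs you actually use in the computation are nonetheless the correct ones.
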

\begin{proof}
By collecting the coefficients of $h^2$ in all expressions (a)--(e) above, using the fact that
$\csm(\one_{\P^3})=4h^3+6h^2+4h+1$ and substituting into Corollary~\ref{thm-FMS*},
we obtain
\begin{equation*}
	d'  = 6 - (4d-d^2 +\mu(D_h)) - 4\cdot 3  +\sum_{0\leq i \leq 3} k_i 
	+ 6\cdot 1 -  
	\sum_{0\leq i < j \leq 3} \ell_{i,j}
\end{equation*}
which yields the formula in the statement.
\end{proof}
We illustrate how our approach works in a significant example.
\begin{example}
\label{example-Johnson}
Johnson in \cite[Example~2]{J} shows that 
the maps $\varphi_{n,d}$, defined in the introduction, yield the equality in \eqref{eq-Johnson-estimate}. Let us give a different proof for $n=3$.
Given $d\geq 2$, consider the matrix
\[
A_d=
\begin{bmatrix}
	d &  &  &  \\
	d-1 & 1 & & \\
	& d-1 & 1 &\\
	& & d-1& 1 
\end{bmatrix}
\] 
which yields $\varphi_{3,d}$. 
Its base locus consists of the line
$x_0=x_2=0$ with the embedded point $(0:0:0:1)$ 
and the isolated point $(0:0:1:0)$.
Let $f_d = x_0^d+x_0^{d-1}x_1+x_1^{d-1}x_2 + x_2^{d-1} x_3$ be the associated polynomial. The surface $V(f_d)\subset\P^3$ is smooth
if $d=2$ and it has a unique singular point at $(0:0:0:1)$ when $d\geq 3$. Hence a general linear section is smooth, so the sum of its Milnor numbers is $0$. By letting $x_2=0$ we get
\[ 
f_d(x_0,x_1,0,x_3) = x_0^{d-1}(x_0+x_1)
\]
hence the reduced curve is a union of two lines and thus $k_2=2$. Moreover, we have $k_i=d$ for $i\neq 2$ and 
$\ell_{i,j}=0$ for all $i,j$ with exception of $\ell_{0,2}=1$, as it is easily checked. Hence, by Proposition~\ref{prop-eq-d'},
\[
d'=d^2-4d-0+(3d+2)-1=d^2-d+1
\]
agreeing with Johnson's result.
\terminou
\end{example}

In view of Proposition~\ref{prop-eq-d'}, 
we can rephrase  \eqref{eq-estimate-P3}
as
\begin{equation}
\label{eq-bound-d'}
\sum_{0\leq i \leq 3} k_i 
-\sum_{0\leq i < j \leq 3} \ell_{i,j} -\mu(D_h) 
\leq
3d+1.
\end{equation} 
A problem here is that  we do not have much control over the Milnor numbers. Fortunately that is not necessary, as a stronger inequality holds. This is our main result.
\begin{theorem}
\label{thm-bound}
With notations as above, for any birational monomial map $\varphi$ of $\P^3$ we have
\begin{equation}
\label{eq-bound-main-thm}
\sum_{0\leq i \leq 3} k_i 
-\sum_{0\leq i < j \leq 3} \ell_{i,j} 
\leq
3d+1
\end{equation}
and hence Johnson's estimate $d'\leq d^2-d+1$  holds.
Moreover, up to permutation of variables or coordinates, the equality
is attained only for the map $\varphi_{3,d}$.
\end{theorem}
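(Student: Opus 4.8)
The plan is to translate \eqref{eq-bound-main-thm} into a combinatorial inequality for the exponent matrix $A$ and then to prove that inequality using the birationality hypothesis $\abs{\det A}=d$. For each $i$ let $S_i=\{\,j:a_{ji}=0\,\}$ be the set of rows with a zero in column $i$; it is nonempty because each column of $A$ has a zero. Restricting $f$ to $H_i=\{x_i=0\}$ keeps exactly the monomials indexed by $S_i$, so $D\cap H_i=V\big(\sum_{j\in S_i}x^{a_j}\big)$ is a plane curve of degree $d$. Factoring out its monomial part $M_i=\gcd_{j\in S_i}x^{a_j}$, whose degree is $\delta_i:=\sum_{l}\min_{j\in S_i}a_{jl}$, I write $\sum_{j\in S_i}x^{a_j}=M_i\,g_i$ with $g_i$ free of monomial factors and $\deg g_i=d-\delta_i$. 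The reduced curve $(D\cap H_i)_{\mathrm{red}}$ is then the union of the coordinate lines $\{x_l=0\}$ for which $\min_{j\in S_i}a_{jl}>0$ — each such line is a fundamental line $H_i\cap H_l$, recorded precisely by $\ell_{i,l}=1$ — together with the residual reduced curve $V(g_i)_{\mathrm{red}}$, which shares no component with them. Hence $k_i=r_i+\deg V(g_i)_{\mathrm{red}}$ with $r_i=\#\{\,l:\ell_{i,l}=1\,\}$, and since $\sum_i r_i=2\sum_{i<j}\ell_{i,j}$ and $\deg V(g_i)_{\mathrm{red}}\le\deg g_i=d-\delta_i$, we get
\[
\sum_{0\le i\le3}k_i-\sum_{0\le i<j\le3}\ell_{i,j}\ \le\ \sum_{0\le i<j\le3}\ell_{i,j}+\sum_{0\le i\le3}(d-\delta_i).
\]
Thus \eqref{eq-bound-main-thm} follows once we prove the combinatorial inequality
\[
\sum_{i=0}^3\delta_i\ \ge\ d-1+\sum_{0\le i<j\le3}\ell_{i,j}.
\]

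For this last inequality, observe that $\ell_{i,j}=1$ is equivalent to $S_i\cap S_j=\emptyset$, so its right-hand side depends only on the zero-pattern of $A$. Birationality is indispensable here: dropping $\abs{\det A}=d$, one can write down $d$-stochastic matrices — for instance suitable patterns with exactly two nonzero entries in each row and column — for which $\sum_i\delta_i$ stays bounded while $\sum_{i<j}\ell_{i,j}$ is large, so the inequality fails. I would therefore argue by a case analysis on the sizes $\abs{S_i}$, feeding in the determinant. When some column has a single zero, say $S_i=\{j\}$, one gets $\delta_i=\sum_l a_{jl}=d$ and the estimate is immediate; the delicate regime is when every column has at least two zeros, which forces $A$ to have very few nonzero entries. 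There the condition $\det A=\pm d$ excludes the configurations that would make $\sum_i\delta_i$ too small: a block pattern of two full $2\times2$ blocks yields $\det A=d^2(a-c)(a'-c')$, an integer multiple of $d^2$ (or zero) and hence never $\pm d$, while a single cyclic pattern yields $\det A=\prod_j p_j-\prod_j(d-p_j)$ with $1\le p_j\le d-1$, and imposing $\abs{\det A}=d$ sharply restricts the admissible parameters. Eliminating the impossible patterns and checking the inequality on the few surviving zero-patterns completes the argument.

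The main obstacle is precisely this second step: the $\delta_i$ are governed by minima of entries over the rows of $S_i$, and tying these local quantities to the single global constraint $\abs{\det A}=d$ is what makes the bookkeeping delicate. Finally, tracing equality through the reduction — which forces both $\deg V(g_i)_{\mathrm{red}}=d-\delta_i$ for every $i$ (each residual $g_i$ reduced) and equality $\sum_i\delta_i=d-1+\sum_{i<j}\ell_{i,j}$ — should single out, up to permutation of rows and columns (that is, of variables and coordinates), only the matrix $A_d$ of Example~\ref{example-Johnson}, so that $\varphi=\varphi_{3,d}$ is the unique extremal map.
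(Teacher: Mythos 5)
Your first step is correct and rather elegant: writing $f|_{x_i=0}=M_i g_i$ with $M_i=\gcd_{j\in S_i}x^{a_j}$ of degree $\delta_i$, noting that $x_l\mid M_i$ exactly when $\ell_{i,l}=1$ and that $g_i$ has no monomial factor, you get the exact identity $k_i=r_i+\deg V(g_i)_{\mathrm{red}}$ and hence
\[
\sum_i k_i-\sum_{i<j}\ell_{i,j}\ \le\ \sum_{i<j}\ell_{i,j}+4d-\sum_i\delta_i .
\]
This reduces everything to the purely combinatorial claim $\sum_i\delta_i\ge d-1+\sum_{i<j}\ell_{i,j}$, which differs in flavor from the paper: the paper never isolates such an inequality, but instead runs a case analysis organized by the geometry of the base locus (a column with one zero; two concurrent fundamental lines in $D$; two skew ones; exactly one), bounding the $k_i$ directly in each case and using $\abs{\det A}=d$ to kill the bad configurations.

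The genuine gap is that you do not prove the combinatorial inequality, and this is where the entire content of the theorem lives. You correctly identify it as ``the main obstacle'' and then only gesture at a strategy (exclude block patterns, constrain cyclic patterns), but the $\delta_i$ are minima of entries over the rows in $S_i$, so they depend on the actual exponents and not just on the zero pattern; ruling out a few zero patterns does not bound them. Worse, your inequality is strictly \emph{stronger} than \eqref{eq-bound-main-thm}, because you discard the slack $\deg V(g_i)_{\mathrm{red}}\le\deg g_i$; so it is not even guaranteed to be true by the truth of the theorem, and it would require its own complete verification across all admissible matrices (rows with one, two or three nonzero entries, columns with one, two or three zeros), each time feeding in $\abs{\det A}=d$ in the way the paper does in its Cases I--IV. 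The same applies to the equality analysis: asserting that equality forces $A=A_d$ up to permutation is exactly the hard uniqueness statement, and nothing in the sketch pins it down. As it stands the proposal is a correct and useful reformulation followed by an unproved claim that is at least as hard as the original statement.
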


\begin{proof}
We may assume $d\geq 2$. 
Let $A$ be the matrix associated to $\varphi$.
Its rows are
the exponents of the monomials, so that column $i$ collects the exponents for the variable $x_i$ in each monomial.
Each column has at least one zero. We may permute rows or columns of this matrix as we please, as this make no difference in the result. We use diagrams to indicate the shape of matrices; an `$\ast$' will denote a nonzero entry.

We have
\begin{equation*}
\label{eq-matriz-Cremona}
\sum_{j=0}^3 a_{ij} = d \quad \text{for }i=0,\dotsc,3  \qquad\text{and}\qquad \abs{\det(A)}=d,
\end{equation*}
as we already pointed out. These conditions are very restrictive. For example, note that $A$ cannot be written as block matrix such as 
\begin{equation*}
\label{eq-matriz-ruim}
\begin{bmatrix}
{B} & \mathbf{0} \\
\mathbf{0} & {C} 
\end{bmatrix}
\end{equation*}
since otherwise $B$ and $C$ would be 
$d$-stochastic, implying that $d^2$ divides $\det(A)$, which is not possible since $d\geq 2$. 
Similarly, $A$ cannot have two rows with a single nonzero entry.
\medskip

We are ready to go. Our analysis is divided in four cases.

\smallskip
\underline{Case I}. There is a column in $A$ with exactly one zero, say
\[
A=
\begin{bmatrix}
\ast & & & \\
\ast & & & \\
\ast & & & \\
0 & a & b & c 
\end{bmatrix}.
\]
Let $\delta$ denote the number of nonzero exponents in the set $\{a,b,c\}$.
Note that $\delta>0$.
We have $f(0,x_1,x_2,x_3)=x_1^ax_2^bx_3^c$,
so that $D\cap H_0$ is the union of $\delta$ lines. Hence $k_0=\delta$ and $\ell_{0,1}+\ell_{0,2}+\ell_{0,3}=\delta$. Since $k_i\leq d$ for arbitrary $i$, we get
\[
\sum k_i - \sum\ell_{i,j} \leq \delta+ 3d - \delta < 3d+1.
\]
Hence we have a strictly inequality in \eqref{eq-bound-main-thm}.
\smallskip

From now on we always assume that each column of $A$ has at least two zeroes. 

Our analysis is now guided by the geometry of the base locus of $\varphi$, which is supported on the six fundamental lines. We
remark that the base locus contains at least one line (\cite[Theorem~1]{DL}).

\smallskip
\underline{Case II}. The base locus contains two concurrent lines, say $\ell_{0,2}=\ell_{0,3}=1$. We may assume
\[
A=
\begin{bmatrix}
	a & c & 0 & 0  \\
	b & e & 0 & 0\\
	0 & r & \ast & \ast   \\
	0 & s & \ast & \ast 
\end{bmatrix}
\]
with $ab\neq 0$. We cannot have $ce\neq 0$ (otherwise $A$ would be a block matrix), so we may assume $c=0$ and thus $a=d$. Since 
the top left $2\times 2$ submatrix is $d$-stochastic, its determinant is a multiple of $d$, hence 
$e=1$ and $b=d-1$.
Then 
\[
f(x_0,x_1,0,x_3)=f(x_0,x_1,x_2,0)=x_0^{d-1}(x_0+x_1)
\]
so $k_2=k_3=2$ and thus
\[
\sum k_i - \sum\ell_{i,j} \leq 2d+2+2-2  < 3d+1
\]
where the rightmost inequality 
holds 
because $d\geq 2$. We have the strict inequality in \eqref{eq-bound-main-thm}.

\smallskip
\underline{Case III}. The base locus contains two skew lines, say $\ell_{0,3}=\ell_{1,2}=1$. Since $A$ cannot be a block matrix, we may assume
\[
A=
\begin{bmatrix}
	a & 0  & d-a &  0  \\
	b &  d-b  & 0   & 0 \\
	0 & 0 & d-c &  c \\
	0 & d-e & 0 & e 
\end{bmatrix}
\]
where the eight numbers are nonzero.
By permuting rows or columns, we may also
assume that $a$ is the minimum of these numbers and consequently that $d-a$ is the maximum. Here we have two possibilities to deal with:
\smallskip

\textbullet~{$c<e$}. Here we consider the restriction to the coordinate plane $x_0=0$:
\[
f(0,x_1,x_2,x_3) = x_3^c(x_2^{d-c}+x_3^{e-c}x_1^{d-e})
\]
{which implies} $k_0 \leq d-c+1$, and the restriction to the coordinate plane $x_1=0$ (note that $d-c\leq d-a$):
\[
f(x_0,0,x_2,x_3)= x_2^{d-c}(x_3^c+x_2^{c-a}x_0^a)
\]
{therefore}  $k_1 \leq   c+1$. Hence
\[
\sum k_i -\sum \ell_{i,j} \leq 
(d-c+1)+(c+1)+2d-2 < 3d+1
\]
yielding the strict inequality in \eqref{eq-bound-main-thm} as well.

\smallskip
\textbullet~{$c\geq e$}. 
Curiously enough, that cannot happen. To see this, note that we would have  $d-c \leq d-e$
and hence
\begin{align*}
	-\det(A) & = bc(d-a)(d-e)-ae(d-b)(d-c) \\
	& \geq be(d-a)(d-c) - ae(d-b)(d-c) \\
	& = e(d-c)(b-a)d.
\end{align*}
But $\abs{\det(A)}=d$, so we must have $e=d-c=b-a=1$. Since $a\leq e$, we get $a=1$, so $b=2$. This yields $\abs{\det(A)}=2(d-1)^3-(d-2)$, but this is a contradiction: This number is greater than $d$ whenever $d\geq 3$ and we cannot have $d=2$ because $b=2$. 

\smallskip
\underline{Case IV}. The base locus contains only one coordinate line, say 
$\ell_{0,2}=1$. We may assume
\[
A=
\begin{bmatrix}
	a & 0 & 0 & u \\
	b & c & 0 & v \\
	0 & e & s & 0  \\
	0 & r & t & w
\end{bmatrix}
\]
with $abst\neq 0$. 
We must have $er=uv=0$, otherwise the base locus would contain two lines. The case $e=u=0$ does not happen, otherwise $A$ would have two rows with a single nonzero entry.
So, we may assume
$e\neq0$ and $u=0$ (note the symmetry of the four corner $2\times 2$ blocks),
hence $r=0$ and $a=d$. Then we have 
$\det(A)=d(csw+vet)$ and thus 
\[
csw+vet=1.
\]
If $cw\neq 0$, then $v=0$, which implies $c=s=w=1$ and hence $b=e=t=d-1$, yielding the matrix $A_d$ of Example~\ref{example-Johnson}. On the other hand, if $cw=0$, then $v=e=t=1$, which implies 
$s=w=d-1$, hence $c=0$ and then $b=d-1$; again, up to a
permutation of rows or columns, we obtain 
 the matrix $A_d$ of Example~\ref{example-Johnson}. 
 
At any rate, in this case we have the equality in \eqref{eq-bound-main-thm}.
\medskip

Finally, if the equality holds in~\eqref{eq-bound-d'}, then the equality in \eqref{eq-bound-main-thm} holds as well.
By the above analysis, that happens only on the last case, which yields $\varphi_{3,d}$.
That finishes the proof of the theorem.
\end{proof}

%
%

\vspace{0.5cm}

\font\smallsc=cmcsc9
\font\smallsl=cmsl9

\noindent{\scriptsize\sc Universidade Federal Fluminense, Instituto de Matem\'atica e Estat\'istica.\\
Rua Alexandre Moura 8, S\~ao Domingos, 24210-200 Niter\'oi RJ,
Brazil.}

{\scriptsize\sl E-mail address: \small\verb?tfassarella@id.uff.br?}

{\scriptsize\sl E-mail address: \small\verb?nivaldomedeiros@id.uff.br?}

\end{document}